\newtheorem{Lem}{Lemma}
\newtheorem{Thm}[Lem]{Theorem}
\newtheorem{Cor}[Lem]{Corollary}
\theoremstyle{definition}
\newtheorem{Def}[Lem]{Definition}
\newcommand{\FG}{\mathrm{FG}(\Omega)}
\begin{document}

\title{Free monoids are coherent}
\subjclass[2010]{20M05, 20M30}
\keywords{free monoids, $S$-acts, coherency}
\date{\today}

\author{Victoria Gould} 
\email{victoria.gould@york.ac.uk}
\author{Mikl\'{o}s Hartmann}
\email{miklos.hartmann@york.ac.uk}
\address{Department of Mathematics\\University
  of York\\Heslington\\York YO10 5DD\\UK}
  
\author{Nik Ru\v{s}kuc} 
\email{nik@mcs.st-and.ac.uk}
\address{School of Mathematics and Statistics\\
 University of St Andrews\\
 St Andrews, Fife KY16 9SS\\UK}

\begin{abstract}  A monoid $S$ is said to be {\em right coherent} if every finitely generated subact of every finitely presented right $S$-act is finitely presented. {\em Left coherency} is defined dually and $S$ is {\em coherent} if it is both right and left  coherent. These notions are analogous to those for a  ring $R$ (where, of course, $S$-acts are replaced by $R$-modules). Choo, Lam and Luft have shown that free  rings are coherent. In this note we prove that, correspondingly, any free monoid is coherent, thus answering a question posed by the first author in 1992.
\end{abstract}

\maketitle

\section{Introduction and preliminaries}\label{sec:ip}

The notion of right coherency for a monoid $S$ is defined in terms of finitary properties of right $S$-acts, corresponding to the  way in which right coherency is defined for a ring $R$ via properties of right $R$-modules. Namely,  $S$ is said to be {\em right (left) coherent} if every finitely generated subact of every finitely presented right (left) $S$-act is finitely presented.   If $S$ is both right and left  coherent then we say that  $S$ is {\em coherent}.
Chase \cite{chase:1960} gave equivalent internal conditions for right coherency of a ring $R$. The analogous result for monoids states that a monoid $S$ is right coherent if and only if for any finitely generated right congruence $\rho$ on $S$, and for any $a,b\in S$, the right annihilator congruence \[r(a\rho)=\{ (u,v)\in S\times S:au\,\rho\, av\}\] is finitely generated, and the subact $(a\rho)S\cap (b\rho)S$ of the right $S$-act $S/\rho$ is finitely generated (if non-empty) \cite{gould:1992}. {\em Left coherency} is defined for monoids and rings in a dual manner; a monoid or ring is {\em coherent} if it is both right and left coherent. Coherency is a rather weak finitary condition on rings and monoids and as demonstrated by Wheeler
\cite{wheeler:1976}, it is intimately related to the model theory of $R$-modules and $S$-acts. 

A natural question arises as to which of the important classes of infinite monoids are  (right) coherent? This study was initiated in \cite{gould:1992}, where it is shown that the free commutative monoid on any set $\Omega$ is coherent. For a (right) noetherian ring $R$, the free monoid ring $R[\Omega^*]$ over $R$ is (right) coherent \cite[Corollary  2.2]{choo:1972}. Since the free ring on $\Omega$ is the monoid ring $\mathbb{Z}[\Omega^*]$ \cite{mckenzie:1983}, it follows immediately that free  rings are coherent.  The question of whether the free monoid $\Omega^*$ itself is coherent was left open in \cite{gould:1992}. The purpose of this note is to provide a positive answer to that question:

\begin{Thm} \label{thm:main} For any set $\Omega$ the free monoid $\Omega^*$  is coherent. 
\end{Thm}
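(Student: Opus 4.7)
The plan is to verify the Chase-style criterion for right coherency quoted in the introduction; the left coherency of $\Omega^*$ will then follow by applying the same argument to the opposite monoid, which is isomorphic to $\Omega^*$ via the word-reversal anti-automorphism $w\mapsto w^R$. Thus, fixing a finitely generated right congruence $\rho$ on $\Omega^*$, with a finite set of generators $R = \{(u_i, v_i) : 1\le i\le n\}$, and arbitrary $a,b \in \Omega^*$, it suffices to prove two things:
\begin{enumerate}
\item the right annihilator $r(a\rho)$ is finitely generated;
\item the subact $(a\rho)\Omega^* \cap (b\rho)\Omega^*$ of $\Omega^*/\rho$ is finitely generated whenever non-empty.
\end{enumerate}

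The key tool is the description of $\rho$ as a prefix rewriting system: $x \,\rho\, y$ iff there is a chain $x = x_0, \ldots, x_m = y$ in which each consecutive pair has the form $(u_j t, v_j t)$ or $(v_j t, u_j t)$ for some $j\le n$ and $t \in \Omega^*$. For (1), I would trace such a chain from $au$ to $av$. Steps whose rewrite occurs strictly past position $|a|$ in the current word are simply applications of a rule from $R$ to the suffix, and hence contribute pairs already captured by $R$. Steps interacting with the prefix fall into one of finitely many \emph{overlap patterns}: each left-hand side $u_i$ or $v_i$ overlaps the fixed word $a$ in only finitely many ways. Enumerating these overlaps yields a finite set of \emph{boundary pairs} in $r(a\rho)$; an induction on the length of the derivation, using left cancellativity of $\Omega^*$, should establish that $R$ together with these boundary pairs generates all of $r(a\rho)$.

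For (2), classes in the intersection correspond to pairs $(aw, bw')$ with $aw\,\rho\,bw'$, witnessed by a chain. Such a chain must pass through a ``synchronization word'' that, through the rules of $R$, reconciles the differing prefixes $a$ and $b$. Because $a$, $b$ and $R$ are fixed and finite, the minimal synchronization configurations should form a finite set, and a direct prefix-analysis argument (possibly invoking Higman's lemma to ensure finiteness of a set of minimal witnesses) should show that the corresponding classes finitely generate the intersection as a subact of $\Omega^*/\rho$.

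The main obstacle will lie in part (1): a derivation from $au$ to $av$ may repeatedly enter and leave the prefix region occupied by $a$, so one must rule out the possibility that arbitrarily long alternations produce essentially new pairs beyond the finitely many boundary pairs. Finding the right notion of boundary pair and a clean inductive invariant that tracks prefix-overlap through successive rewrites is expected to be the technical heart of the proof.
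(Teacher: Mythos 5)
Your framework matches the paper's --- reduction to right coherency via word reversal, verification of the Chase-style criterion, and the description of $\rho$ by prefix rewriting (the paper's $H$-sequences) --- but the proposal stops exactly where the real proof begins, and the local-overlap picture you sketch for part (1) would fail. In a right congruence on $\Omega^*$ every elementary step replaces a \emph{prefix}: a step turns $u_jt$ into $v_jt$, so there is no such thing as a rewrite ``occurring strictly past position $|a|$'' or ``applied to the suffix.'' Intermediate words of a chain from $au$ to $av$ need not begin with $a$, or even contain $a$ as a factor, so the interaction with the prefix is not captured by finitely many overlap patterns between $a$ and the relation words; the obstacle is global, not local. What one actually needs is a \emph{length bound} on a generating set for $r(a\rho)$, and this is where the paper's content lies: it introduces irreducible quadruples $(a,u;b,v)$ (no common non-empty suffix of $u,v$ can be cancelled while staying in $\rho$), proves that an irreducible chain must at some stage shrink the entire current word to some $c_{i+1}$ of length at most $\max(|b|,K)$ (because some $t_{i+1}=\epsilon$), iterates this to factor $u=x_k\cdots x_1$ into pieces of bounded length with $aux_1^{-1}\cdots x_{j-1}^{-1}\,\rho\,c_{\ell_j}$, and then applies a pigeonhole argument on the finitely many possible values of $c_{\ell_j}$ to replace a long $u$ by a strictly shorter $u'$ with $au'\,\rho\,au$, yielding generators with $|u|+|v|\le 3N$. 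Your ``boundary pairs plus induction on derivation length'' produces no such bound, and you yourself flag the repeated entering and leaving of the prefix region as the unresolved heart --- that unresolved heart is precisely Lemmas~\ref{Tech} and~\ref{Main}.

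For part (2) your synchronization intuition points in the right direction --- the paper shows that any irreducible witness forces some $t_i=\epsilon$, so the class $(au)\rho$ lies in the explicit finite set $\{a\rho,\,b\rho,\,c\rho : (c,d)\in H\}\cap(a\rho)S\cap(b\rho)S$ --- but the appeal to Higman's lemma is a genuine gap rather than a safety net. Generation of a subact of $\Omega^*/\rho$ is governed by the left-divisibility (prefix) order on representatives, which is not a well-quasi-order: for distinct letters $\alpha,\beta\in\Omega$ the set $\{\alpha^n\beta : n\ge 1\}$ is an infinite antichain, so a set of ``minimal synchronization configurations'' has no a priori reason to be finite. Higman's lemma concerns the scattered-subword order, which is neither the relevant order here nor compatible with $\rho$-classes or right multiplication. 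So both halves of your plan rest on finiteness claims that are not established and, as formulated, are not true; the suffix-cancellation mechanism of the paper is the missing idea that makes both finiteness claims provable.
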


 Our proof of Theorem~\ref{thm:main}, given in Section~\ref{sec:result},  provides a blueprint for the proof in \cite{gould:future} that free left ample monoids are right coherent. Further comments are provided in Section~\ref{sec:comments}.

A few words on notation and technicalities follow. If $H$ is a set of pairs of elements of a monoid $S$, then we denote by
$\langle H\rangle$ the right congruence on $S$ generated by $H$. It is easy to see that if $a,b\in S$, then
$a\,\langle H\rangle\, b$ if and only if $a=b$ or there is an $n\geq 1$ and  a sequence
\[
(c_1,d_1,t_1;c_2,d_2,t_2;\dots; c_n,d_n,t_n)
\]
of elements of $S$, with $(c_i,d_i)\in H$ or $(d_i,c_i)\in H$, such that the following equalities hold:
\[a=c_1t_1,\, d_1t_1=c_2t_2,\, \hdots\, , d_nt_n=b.\]
Such a sequence will be referred to as an {\em $H$-sequence} (of length $n$) {\em connecting} $a$ and $b$. It is convenient to 
allow $n=0$ in the above sequence; the empty sequence is interpreted as asserting equality $a=b$. 
Where convenient we will use the fact that 
$\Omega^*$ is a submonoid of the free group $\FG$ on $\Omega$, in order to give the natural meaning to expressions such as
$yx^{-1}$, where $x,y\in \Omega^*$ and $x$ is a suffix of $y$. 

\section{Proof of Theorem~\ref{thm:main}}\label{sec:result}

Let $\Omega$ be a set; it is clearly enough to show that $\Omega^*$ is right coherent. To this end let $\rho$ be the right congruence on $\Omega^*$ generated by a finite subset $H$ of  $\Omega^* \times \Omega^*$, which without loss of generality we assume to be symmetric.

\begin{Def}\label{def:2}
A quadruple $(a,u;b,v)$ of elements of $S$ is said to be {\em irreducible}
if $(au,bv)\in \rho$ and for any common non-empty suffix $x$ of $u$ and $v$ we have that $(au x^{-1},bv x^{-1}) \not \in \rho$.
\end{Def}

\begin{Def}\label{def:1} 
An $H$-sequence
$(c_1,d_1,t_1;\dots;c_n,d_n,t_n)$ with
\[
au=c_1t_1,d_1t_1=c_2t_2,\ldots,d_nt_n=bv
\]
is {\em irreducible} with respect to $(a,u;b,v)$ if $u,t_1,\ldots,t_n,v\in \Omega^*$ do not have a common non-empty suffix.
Clearly, this is equivalent to one of $u,t_1,\ldots, t_n,v$ being $\epsilon$.
\end{Def}

 Throughout this note for an  $H$-sequence as above  we define $a=d_0,u=t_0,c_{n+1}=b$ and $t_{n+1}=v$. 
  It is clear that 
if the quadruple $(a,u;b,v)$ is irreducible then any $H$-sequence connecting $au$ and $bv$ must be irreducible with respect to 
$(a,u;b,v)$.

We define
\[
K=\text{max} \{ \left|p\right|: (p,q) \in H\}.
\]

\begin{Lem}\label{Tech}
Let the $H$-sequence $(c_1,d_1,t_1;\dots;c_n,d_n,t_n)$ with
\[
au=c_1t_1,d_1t_1=c_2t_2,\ldots,d_nt_n=bv
\]
be irreducible with respect to $(a,u;b,v)$.
Then either the empty $H$-sequence is irreducible with respect to $(a,u;c_1,t_1)$ (in which case $\left|u\right|\leq\text{max } (|b|, K)$ and $u=\epsilon$ or $t_1=\epsilon$) or there exist an index $1\leq i\leq n$ such that $t_{i+1}=\epsilon$ (so that $au\,\rho\, c_{i+1})$ and $x \in \Omega^+$ such that $\left|x\right|\leq\text{max }(\left|b\right|,K)$, the sequence
\[
(c_1,d_1,t_1x^{-1};\dots; c_{i-1},d_{i-1},t_{i-1}x^{-1})
\]
satisfies
\[
au x^{-1}=c_1 t_1 x^{-1},d_1t_1 x^{-1}=c_2t_2 x^{-1},\ldots,d_{i-1}t_{i-1} x^{-1}=c_it_i x^{-1},
\]
and is an irreducible $H$-sequence with respect to $(a,ux^{-1};c_i,t_ix^{-1})$. 
\end{Lem}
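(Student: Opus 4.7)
The plan is a case split on the position of the first empty word in the extended sequence $t_0, t_1, \ldots, t_{n+1}$, where I set $t_0 := u$ and $t_{n+1} := v$. Since the given $H$-sequence is irreducible with respect to $(a,u;b,v)$, some $t_j$ equals $\epsilon$; let $j^\ast$ be the smallest such index.

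If $j^\ast\in\{0,1\}$, then one of $u, t_1$ is $\epsilon$, so they share no common non-empty suffix, and the empty $H$-sequence is irreducible with respect to $(a,u;c_1,t_1)$, yielding the first alternative. The bound $|u|\leq\max(|b|,K)$ is trivial when $u=\epsilon$, and when $t_1=\epsilon$ it follows from $au=c_1t_1=c_1$, whence $u$ is a suffix of $c_1$ and $|u|\leq|c_1|\leq K$.

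If $j^\ast\geq 2$, set $i := j^\ast-1$, so that $1\leq i\leq n$ and $t_{i+1}=\epsilon$. The key observation is that $t_0, t_1, \ldots, t_i$ are all non-empty (by minimality of $j^\ast$), and each consecutive pair $t_{j-1}, t_j$ (for $1\leq j\leq i$) sits as two non-empty suffixes of the single word $c_jt_j$ (with $c_1 t_1 = au$ and $c_j t_j = d_{j-1} t_{j-1}$ for $j\geq 2$), forcing them to agree in their final letter. Chaining along $j$, all of $t_0, \ldots, t_i$ end in a common letter, and I would take $x\in\Omega^+$ to be their \emph{longest} common suffix. The bound $|x|\leq\max(|b|,K)$ then follows because $x$ is a suffix of $t_i$, and either $i<n$ with $d_i t_i=c_{i+1}$ giving $|t_i|\leq K$, or $i=n$ with $d_n t_n=b$ giving $|t_n|\leq|b|$.

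To finish, I would verify that the pruned sequence $(c_1, d_1, t_1 x^{-1}; \ldots; c_{i-1}, d_{i-1}, t_{i-1} x^{-1})$ realises the required equalities by right-multiplying the original $H$-sequence equalities by $x^{-1}$ in $\FG$ (each side remains in $\Omega^*$ since $x$ is a common suffix of the relevant $t_j$'s), and deduce its irreducibility with respect to $(a, ux^{-1}; c_i, t_i x^{-1})$ immediately from the maximality of $x$: by construction $ux^{-1}, t_1 x^{-1}, \ldots, t_i x^{-1}$ have no common non-empty suffix. The only conceptually non-trivial step is the propagation of the common final letter down the chain $t_0, t_1, \ldots, t_i$; everything else is suffix bookkeeping.
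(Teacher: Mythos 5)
Your argument is correct and is essentially the paper's own proof: the same dichotomy (the empty sequence being irreducible with respect to $(a,u;c_1,t_1)$, i.e.\ the first empty word among $t_0,\dots,t_{n+1}$ occurring at position $\leq 1$, versus position $\geq 2$), the same minimal index $i$ with $t_{i+1}=\epsilon$, the same choice of $x$ as the longest common non-empty suffix of $t_0,\dots,t_i$, and the same bound via $x$ being a suffix of $d_it_i=c_{i+1}$ with $c_{i+1}$ either a component of an $H$-pair or equal to $b$ --- indeed you supply a detail the paper leaves implicit, namely the shared-last-letter chaining that guarantees the common non-empty suffix exists. One pedantic blemish: in the case $t_1=\epsilon$ your justification $|u|\leq|c_1|\leq K$ tacitly assumes $n\geq 1$; in the degenerate case $n=0$ the convention gives $c_1=b$, so one only gets $|u|\leq |b|$, which is exactly why the statement's bound is $\max(|b|,K)$ rather than $K$.
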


\begin{proof}
If the empty sequence is irreducible with respect to $(a,u;c_1,t_1)$ then either $u=\epsilon$ or $t_1=\epsilon$.
In both cases we have that $\left|u\right|\leq \text{max }(|b|,K)$.
Suppose therefore that the empty sequence is not irreducible with respect to $(a,u;c_1,t_1)$.
Let $i\in \{ 1,\hdots,  n\} $ be the smallest index such that $t_{i+1}=\epsilon$ (such an index exists, because our original sequence is irreducible), and let $x$ be the longest common non-empty suffix of $u=t_0,t_1,\ldots, t_i$.
Then the sequence
\[
(c_1,d_1,t_1x^{-1};\dots; c_{i-1},d_{i-1},t_{i-1}x^{-1})
\]
clearly satisfies
\[
au x^{-1}=c_1 t_1 x^{-1},d_1t_1 x^{-1}=c_2t_2 x^{-1},\ldots,d_{i-1}t_{i-1} x^{-1}=c_it_i x^{-1}
\]
and is irreducible with respect to $(a,ux^{-1};c_i,t_ix^{-1})$.
Furthermore, since $t_{i+1}=\epsilon$, we have that $d_it_i=c_{i+1}$, so $x$ is a suffix of $c_{i+1}$. If $i<n$ then
$(c_{i+1},d_{i+1})\in H$, while if $i=n$ we have $c_{i+1}=b$. In either case 
$|x|\leq |c_{i+1}|\leq \text{max }(|b|,K)$. 
\end{proof}

We deduce immediately that one condition for coherency of $\Omega^*$ is fulfilled. 

\begin{Cor}\label{cor:2}
Let $a,b \in S$.
Then $(a\rho)S \cap (b\rho)S$ is empty or finitely generated.
\end{Cor}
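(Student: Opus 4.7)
The plan is to handle the non-empty case (if $(a\rho)S\cap(b\rho)S$ is empty there is nothing to prove) by producing a finite generating set for the intersection, consisting of those classes $(au')\rho$ for which $(a,u';b,v')$ is irreducible (in the sense of Definition~\ref{def:2}) for some $v'$. First I would show that every element of the intersection reduces to such a class, and then that there are only finitely many such classes.

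For the reduction step, given $(au)\rho\in (a\rho)S\cap (b\rho)S$, I would pick a witness $v\in\Omega^*$ with $(au,bv)\in\rho$ and let $w$ be the longest common suffix of $u$ and $v$ satisfying $(auw^{-1},bvw^{-1})\in\rho$; such a $w$ exists because $w=\epsilon$ is a candidate and there are only finitely many suffixes of $u$. Writing $u=u'w$ and $v=v'w$, maximality of $|w|$ forces $(a,u';b,v')$ to be irreducible: any longer candidate $xw$ (with $x$ a common non-empty suffix of $u'$ and $v'$ and $(au'x^{-1},bv'x^{-1})\in\rho$) would contradict maximality. Then $(au)\rho=(au')\rho\cdot w$ realises the given element as a translate of an element of the proposed generating set.

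For finiteness, let $T=\{(au')\rho:(a,u';b,v')\text{ is irreducible for some }v'\}$. Fix any such $(a,u';b,v')$ and any $H$-sequence connecting $au'$ with $bv'$. By the remark immediately following Definition~\ref{def:1} such a sequence is automatically irreducible with respect to $(a,u';b,v')$, so Lemma~\ref{Tech} applies (the edge case of the empty $H$-sequence, i.e.\ $au'=bv'$, is handled directly: irreducibility then forces $u'=\epsilon$ or $v'=\epsilon$, yielding only finitely many $(au')\rho$). The two alternatives of Lemma~\ref{Tech} yield respectively $au'\in\{a\}\cup\{c:(c,d)\in H\}$ (when $u'=\epsilon$ or $t_1=\epsilon$), or $(au',c_{i+1})\in\rho$ with $c_{i+1}\in\{c:(c,d)\in H\}\cup\{b\}$; in either case $(au')\rho$ lies in a finite set determined by $a$, $b$ and $H$. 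Hence $T$ is finite, every element of $T$ lies in $(a\rho)S\cap (b\rho)S$, and by the reduction step every element of the intersection is a translate of some element of $T$, so $T$ generates it.

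The main obstacle is the reduction step: one has to be sure that peeling off the longest suffix $w$ yields a genuine irreducible quadruple and that the equality $(au)\rho=(au')\rho\cdot w$ is legitimate. This is precisely where freeness of $\Omega^*$ is used, via unambiguous factorisation and the well-definedness of the suffix arithmetic inside $\FG$. Once the reduction is in place, Lemma~\ref{Tech} essentially hands us the finiteness on a plate.
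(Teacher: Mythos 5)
Your proposal is correct and takes essentially the same route as the paper: the paper's proof uses the generating set $X=\{a\rho,b\rho,c\rho:(c,d)\in H\}\cap(a\rho)S\cap(b\rho)S$, reduces (implicitly, via the phrase ``it is enough to show that for every irreducible quadruple $(a,u;b,v)$ we have $(au)\rho\in X$'') every element of the intersection to the class of an irreducible quadruple, and then applies Lemma~\ref{Tech} exactly as you do. You merely make explicit the peel-off-the-longest-common-suffix argument and the empty-sequence edge case that the paper leaves tacit.
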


\begin{proof}
Let us suppose that $(a\rho)S \cap (b\rho)S\neq \emptyset$ and let
\[
X=\{a\rho,b\rho,c\rho: (c,d) \in H\}\cap (a\rho)S \cap (b\rho)S.
\]
We claim that $X$ generates $(a\rho)S \cap (b\rho)S$.
It is enough to show that for every irreducible quadruple $(a,u;b,v)$ we have that $(au)\rho \in X$.
For this, let $(c_1,d_1,t_1;\dots;c_n,d_n,t_n)$ be an $H$-sequence with
\[
au=c_1t_1,\ldots,d_nt_n=bv.
\]
Note that this sequence is necessarily irreducible with respect to $(a,u;b,v)$.
Then by Lemma \ref{Tech}, either $u=\epsilon$, or $t_i=\epsilon$ for some $i\in \{ 1,\hdots, n\}$, or $v=t_{n+1}=\epsilon$. In each of these cases we see that $(au)\rho \in X$.
\end{proof}

It remains to show that for any $a\in \Omega^*$, the right congruence $r(a\rho)$ is finitely generated. To this end we first present a technical result.

\begin{Lem}\label{Main}
Let $(c_1,d_1,t_1;\dots;c_n,d_n,t_n)$ with
$$
au=c_1t_1,\ldots,d_nt_n=bv
$$
be an irreducible $H$-sequence with respect to $(a,u;b,v)$.
Then either $u=\epsilon$, or there exist a factorisation $u=x_k\ldots x_1$ and indices $n+1\geq \ell_1 > \ell_2 > \ldots > \ell_k \geq 1$ such that for all $1 \leq j \leq k$: 

(i) $0<\left|x_j\right| \leq \text{max }(\left|b\right|,K)$  and 

(ii)  $au x_1^{-1} \ldots x_{j-1}^{-1} \mathrel{\rho} c_{\ell_j}$ (note that for $j=1$ we have $au \ \rho\ c_{\ell_1}$).
\end{Lem}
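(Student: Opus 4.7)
The natural strategy is induction on $|u|$ (equivalently, on the length $n$ of the $H$-sequence), with Lemma~\ref{Tech} supplying the one-step reduction. The base case $u = \epsilon$ is one of the two alternatives offered by the conclusion, so assume $u \ne \epsilon$. The trivial subcase $n = 0$ is disposed of directly: irreducibility of the empty sequence with respect to $(a, u; b, v)$ forces $u = \epsilon$ or $v = \epsilon$, and since $u \ne \epsilon$ we must have $v = \epsilon$, whence $au = b$ with $|u| \le |b|$, so we take $k = 1$, $x_1 = u$ and $\ell_1 = n + 1 = 1$.

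For the inductive step (with $n \ge 1$ and $u \ne \epsilon$), apply Lemma~\ref{Tech}. In its first alternative the empty sequence is irreducible with respect to $(a, u; c_1, t_1)$; since $u \ne \epsilon$ this forces $t_1 = \epsilon$, giving $au = c_1$ and $|u| \le \max(|b|, K)$, and we again take $k = 1$, $x_1 = u$, $\ell_1 = 1$. Otherwise Lemma~\ref{Tech} supplies an index $i \in \{1, \ldots, n\}$ with $t_{i+1} = \epsilon$ (so $au\,\rho\,c_{i+1}$) and $x \in \Omega^+$ with $|x| \le \max(|b|, K)$, such that the truncated sequence $(c_1, d_1, t_1 x^{-1}; \ldots; c_{i-1}, d_{i-1}, t_{i-1} x^{-1})$ is irreducible with respect to $(a, u x^{-1}; c_i, t_i x^{-1})$. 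Set $x_1 := x$, $\ell_1 := i + 1$, and feed this truncated configuration into the inductive hypothesis: induction is legitimate since $|u x^{-1}| < |u|$, and because the new ``$b$'' is $c_i$ with $|c_i| \le K$, the subproblem's bound $\max(|c_i|, K) = K$ is majorised by $\max(|b|, K)$. The inductive hypothesis yields either $u x^{-1} = \epsilon$ (in which case $u = x_1$ and we finish with $k = 1$) or, after relabelling its factors as $x_2, \ldots, x_k$ with $u x^{-1} = x_k \cdots x_2$ and its indices as $i \ge \ell_2 > \cdots > \ell_k \ge 1$, a factorisation verifying (i) and (ii) for the subproblem. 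Concatenation then furnishes $u = x_k \cdots x_2 x_1$, the strict chain $\ell_1 = i + 1 > i \ge \ell_2 > \cdots > \ell_k \ge 1$, the uniform bound $|x_j| \le \max(|b|, K)$ for every $j$, and the required congruences: $au \,\rho\, c_{\ell_1}$ directly from Lemma~\ref{Tech}, and $a u x_1^{-1} \cdots x_{j-1}^{-1} = (a u x^{-1}) x_2^{-1} \cdots x_{j-1}^{-1} \,\rho\, c_{\ell_j}$ for $j \ge 2$ from the inductive hypothesis.

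The only real obstacle is careful bookkeeping: aligning the indices and factors produced by the subproblem with the numbering expected by the statement, and verifying that the suffix-length bound $\max(|b|, K)$ is uniformly inherited even though the corresponding parameter naturally shrinks to $K$ inside each recursive call. Beyond that, the argument is essentially an unfolding of Lemma~\ref{Tech} and introduces no new algebraic ingredient.
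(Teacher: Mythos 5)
Your proposal is correct and follows essentially the same route as the paper: induction on $|u|$, with Lemma~\ref{Tech} providing the one-step reduction, $\ell_1 = i+1$, and the inductive hypothesis applied to the truncated sequence with respect to $(a, ux^{-1}; c_i, t_i x^{-1})$. You additionally spell out two points the paper leaves implicit — that the subproblem's bound $\max(|c_i|, K) = K$ (since $(c_i,d_i)\in H$ gives $|c_i|\le K$) is majorised by $\max(|b|, K)$, and the index bookkeeping $\ell_1 = i+1 > i \ge \ell_2$ — which is careful rather than divergent.
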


\begin{proof} We proceed by induction on $|u|$: if $|u|=0$ the result is clear. 
Suppose that $|u|>0$ and the result is true for all shorter words. 
If the empty sequence is irreducible with respect to $(a,u;c_1,t_1)$, then $t_1=\epsilon$ and the factorisation $u=x_1$ satisfies the required conditions, with $k=1$ and $\ell_1=1$.
Otherwise, by Lemma \ref{Tech}, there exist an index $1\leq i \leq n$ 
such that $t_{i+1}=\epsilon$, so that $au\,\rho\, c_{i+1}$, and $x_1 \in \Omega^+$ such that $\left|x_1\right|\leq \text{max }(\left|b\right|,K)$ and the sequence
\[
(c_1,d_1,t_1x_1^{-1};\dots; c_{i-1},d_{i-1},t_{i-1}x_1^{-1})
\]
satisfies
\[
au x_1^{-1}=c_1 t_1 x_1^{-1},d_1t_1 x_1^{-1}=c_2t_2 x_1^{-1},\ldots,d_{i-1}t_{i-1} x_1^{-1}=c_it_i x_1^{-1}
\]
and is an irreducible $H$-sequence with respect to $(a,ux_1^{-1};c_i,t_ix_1^{-1})$.  Put
$\ell_1=i+1$. Since $|ux^{-1}_1|<|u|$, the result follows by induction.
\end{proof}

\begin{Lem}\label{lem:1}
Let $a \in \Omega^*$.
Then $r(a\rho)$ is finitely generated.
\end{Lem}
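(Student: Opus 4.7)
I will construct a finite generating set $G$ for $r(a\rho)$ by telescoping arbitrary pairs through short intermediates supplied by Lemma~\ref{Main}. Set $M := \max(|a|, K)$, $C := \{c : (c, d) \in H\}$, and for each $c \in C$ with $A_c := \{w : aw \,\rho\, c\}$ non-empty, fix a shortest $w_c \in A_c$. A preliminary claim is a uniform length bound $|w_c| \leq N_0$ for some constant $N_0$ depending only on $H$ and $a$. To establish it, apply Lemma~\ref{Main} to a suitable irreducible quadruple/sequence involving $w_c$ and $c$ (with $b = \epsilon$, $v = c$) to factor $w_c = x_k \cdots x_1$ with $|x_j| \leq K$ and intermediate $c^{(j)} \in C$; a pigeonhole/splicing argument forces the $c^{(j)}$ to be distinct, since if $c^{(i)} = c^{(j)}$ with $i < j$, then $(w_c x_1^{-1} \cdots x_{i-1}^{-1}, w_c x_1^{-1} \cdots x_{j-1}^{-1}) \in r(a\rho)$ and right-multiplying by $x_{i-1} \cdots x_1$ produces a strictly shorter element of $A_c$, contradicting minimality. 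Hence $k \leq |C|$ and $|w_c| \leq |C| \cdot K =: N_0$.

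Take $N := N_0 + M$ and define
\[
G := \{(u, v) \in r(a\rho) : |u|, |v| \leq N\},
\]
which is a finite subset of $r(a\rho)$. I prove $\langle G \rangle = r(a\rho)$ by induction on $|u| + |v|$ for $(u, v) \in r(a\rho)$. If $u, v$ share a common non-empty suffix $x$ with $(aux^{-1}, avx^{-1}) \in \rho$, then $(ux^{-1}, vx^{-1}) \in r(a\rho)$ has smaller total length, is in $\langle G \rangle$ by induction, and right-multiplication by $x$ gives $(u, v) \in \langle G \rangle$. Otherwise the quadruple $(a, u; a, v)$ is irreducible, and Lemma~\ref{Main} factorises $u = x_k \cdots x_1$ with $|x_j| \leq M$ and $au^{(j)} \,\rho\, c^{(j)} \in C$ (where $u^{(j)} := ux_1^{-1} \cdots x_{j-1}^{-1}$). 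Symmetrically, applying the lemma to the reversed $H$-sequence for the quadruple $(a, v; a, u)$ — which is itself an $H$-sequence since $H$ is symmetric — factorises $v = y_\ell \cdots y_1$ with $av^{(j)} \,\rho\, c'^{(j)} \in C$.

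The derivation of $(u, v)$ telescopes through the $w_c$'s. By downward induction on $j$ from $k$ to $1$, I claim $(u^{(j)}, w_{c^{(j)}}) \in \langle G \rangle$. The base case $j = k$ is immediate since $|u^{(k)}| = |x_k| \leq M \leq N$ and $|w_{c^{(k)}}| \leq N_0 \leq N$. The inductive step combines the previous $(u^{(j+1)}, w_{c^{(j+1)}}) \in \langle G \rangle$ (right-multiplied by $x_j$ to give $(u^{(j)}, w_{c^{(j+1)}} x_j) \in \langle G \rangle$) with the short generator $(w_{c^{(j+1)}} x_j, w_{c^{(j)}}) \in G$; both sides of the latter have length $\leq N$, and it lies in $r(a\rho)$ because $a w_{c^{(j+1)}} x_j \,\rho\, au^{(j+1)} x_j = au^{(j)} \,\rho\, c^{(j)} \,\rho\, aw_{c^{(j)}}$. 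Ending at $j = 1$ yields $(u, w_{c^{(1)}}) \in \langle G \rangle$; analogously $(v, w_{c'^{(1)}}) \in \langle G \rangle$; and $(w_{c^{(1)}}, w_{c'^{(1)}}) \in G$ since both have length $\leq N_0$ and $c^{(1)} \,\rho\, au \,\rho\, av \,\rho\, c'^{(1)}$. Transitivity closes the derivation. The principal obstacles are the uniform bound on $|w_c|$ and the verification that a suitable irreducible $H$-sequence with respect to $(a, w_c; \epsilon, c)$ can be chosen in the first place (which may require preliminary peeling of common suffixes of $w_c$ and $c$, with a fallback to a shorter-$c$ induction); the remainder is essentially routine bookkeeping.
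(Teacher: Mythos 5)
Your proposal is correct in substance, but it takes a genuinely different route from the paper. The paper runs a single induction on $|u|+|v|$: applying Lemma~\ref{Main} to the longer word $u$, it uses a counting pigeonhole (fewer than $L-1$ distinct values among $c_1,\ldots,c_n$ against $k>L$ factors) to find a repetition $c_{\ell_i}=c_{\ell_j}$ within the last $L$ factors, excises the middle block of $u$ to get a strictly shorter $u'$ with $(u,u')\in\langle X\rangle$ and $au'\,\rho\,av$, and closes by the inductive hypothesis applied to $(v,u')$. You instead precompute a shortest representative $w_c$ for each target $c$, prove a uniform bound on $|w_c|$ --- where your pigeonhole runs on \emph{minimality} (a repeated intermediate target splices to a shorter element of $A_c$) rather than on counting within a window --- and then telescope the Lemma~\ref{Main} factorisation through the $w_c$'s, so that the irreducible case is settled outright and the outer induction is needed only for suffix peeling. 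The paper's version is shorter and needs no preliminary lemma; yours is more structural, exhibits canonical representatives through which every derivation factors, and nicely exploits the $\max(|b|,K)$ bound of Lemma~\ref{Main} with $b=\epsilon$ to get the sharper constant $K$ in the preliminary claim. Three routine repairs are needed: (i) the intermediate values $c_{\ell_j}$ may include $c_{n+1}=b$, so $C$ must be enlarged by $a$ (main argument) and $\epsilon$ (preliminary claim), giving $k\le |C|+1$ and adjusting $N_0$; (ii) for the preliminary bound, the quadruple $(a,w_c;\epsilon,c)$ need not be irreducible, but your ``shorter-$c$ induction'' fallback is unnecessary: peel the longest common suffix $x$ of $w_c,t_1,\ldots,t_n,c$ from any connecting $H$-sequence, apply Lemma~\ref{Main} to the peeled sequence (irreducible with respect to $(a,w_cx^{-1};\epsilon,cx^{-1})$), and observe that splicing followed by reattaching $x$ still contradicts minimality of $w_c$ in $A_c$, while $|x|\le|c|\le K$ merely adds $K$ to $N_0$; (iii) the degenerate cases $u=\epsilon$ or $v=\epsilon$ should be stated explicitly (each reduces to a single generator $(\epsilon,w)$ of bounded length). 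Finally, your assertion that $G$ is finite carries the same harmless imprecision as the paper's own $X$ when $\Omega$ is infinite (already the diagonal pairs $(t,t)$, and pairs such as $(pt,qt)$ with $(p,q)\in H$ and $t$ over exotic letters, are infinitely many short pairs); both are fixed by restricting generators to words over the finitely many letters occurring in $H$ and $a$, which suffices because the blocks produced in Lemma~\ref{Tech} are suffixes of components of $H$ or of $b$.
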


\begin{proof}
Let $K'=\text{max}(K,\left|a\right|)+1, L=2\left|H\right|+2, N=K'L$ and define
\[
X=\{(u,v):\left|u\right|+\left|v\right|\leq 3N\} \cap r(a\rho).
\]
We claim that $X$ generates $r(a\rho)$.
It is clear that $\langle X\rangle \subseteq r(a\rho)$.


Let $(u,v) \in r(a\rho)$.
We show by induction on $\left|u\right|+\left|v\right|$ that $(u,v) \in \langle X \rangle$.
Clearly, if $\left|u\right|+\left|v\right|\leq 3N$, then $(u,v) \in X$.
We suppose therefore that  $\left|u\right|+\left|v\right|>3N$ and  make the inductive assumption that if $(u',v') \in r(a\rho)$ and $\left|u'\right| + \left|v'\right| < \left|u\right| + \left|v\right|$,  then $(u',v') \in \langle X\rangle$.
If the quadruple $(a,u;a,v)$ is not irreducible, it is immediate that  $(u,v) \in \langle X \rangle$.
Without loss of generality we therefore suppose that the quadruple
 $(a,u;a,v)$ is irreducible and $\left|v\right|\leq \left|u\right|$, so that $\left|u\right|>N$.
Let $(c_1,d_1,t_1;\dots;c_n,d_n,t_n)$ with
\[
au=c_1t_1,\ldots,d_nt_n=av
\]
be an irreducible $H$-sequence with respect to $(a,u;a,v)$. We apply Lemma~ \ref{Main}, noting here that $a=b$. Clearly $u\neq \epsilon$, so by
 Lemma \ref{Main}, there exists a factorisation $u=x_k\ldots x_1$ such that for all $1\leq j \leq k$ we have $0<\left|x_j\right|\leq K'$ and $au x_1^{-1}\ldots x_{j-1}^{-1} \mathrel{\rho} c_{\ell_j}$ for some $1\leq \ell_j \leq n+1$.
Since $\left|u\right| > K'L$ we have that $k >L$.
Note that the number of distinct  elements 
among $c_1,\hdots, c_n$  is less than $L-1$.
This in turn implies that there exist two indices $1\leq k-L < j<i \leq k$ such that $c_{\ell_i}=c_{\ell_j}$, so that
\[au x_1^{-1} \ldots x_{i-1}^{-1} \mathrel{\rho} c_{\ell_i}=c_{\ell_j} \mathrel{\rho} au x_1^{-1} \ldots x_{j-1}^{-1}.\]
Since  $i,j > k-L$ we have that $k-i+1 \leq L$, so $\left| u x_1^{-1}\ldots x_{i-1}^{-1}\right|=\left| x_k \ldots x_i\right| \leq K'L$, and similarly $\left|u x_1^{-1} \ldots x_{j-1}^{-1}\right| \leq K'L$.
As a consequence  $(u x_1^{-1}\ldots x_{i-1}^{-1},u x_1^{-1} \ldots x_{j-1}^{-1}) \in X$, and letting $u'=u x_1^{-1}\ldots x_{i-1}^{-1} x_{j-1} \ldots x_k$, we see that
\[
(u',u)=(u x_1^{-1}\ldots x_{i-1}^{-1},u x_1^{-1} \ldots x_{j-1}^{-1}) x_{j-1} \ldots x_1 \in \langle X\rangle.
\]
In particular, $au' \mathrel{\rho} au \mathrel{\rho} av$.
Note that $\left|u'\right|<\left|u\right|$, because $j<i$ and $x_j \neq \epsilon$.
Thus by the induction hypothesis we have that $(v,u') \in \langle X \rangle$ and so the lemma is proved.
\end{proof}

In view of the characterisation of coherency given in \cite{gould:1992} and  cited in the Introduction, Corollary~\ref{cor:2} and Lemma~\ref{lem:1} complete the proof of Theorem~\ref{thm:main}.

\section{Comments}\label{sec:comments}

Given that the class of right coherent monoids is closed under retract \cite{gould:future}, it  follows from the results of that paper that free monoids are coherent. However, as the arguments  in \cite{gould:future} for free left ample monoids are burdened with unavoidable technicalities, we prefer to present here the more transparent proof that $\Omega^*$ is coherent, by way of motivation for the work of \cite{gould:future}. With free objects in mind, we remark that we also show in  \cite{gould:future} that the  free inverse monoid 
on $\Omega$ is  not coherent  if $|\Omega|>1$.

\end{document}